\newtheorem{theorem}{Theorem}[section]
\newtheorem{lemma}[theorem]{Lemma}
\newtheorem{proposition}[theorem]{Proposition}
\theoremstyle{definition}
\newtheorem{example}[theorem]{Example}
\newtheorem{remark}[theorem]{Remark}
\newtheorem{question}[theorem]{Question}
\numberwithin{equation}{theorem}
\def\ge{\geqslant}
\def\lf{\lfloor}
\def\rf{\rfloor}
\def\divv{\operatorname{div}}
\def\edim{\operatorname{edim}}
\def\ff{\operatorname{frac}}
\def\rank{\operatorname{rank}}
\def\supp{\operatorname{supp}}
\def\Proj{\operatorname{Proj}}
\def\CC{\mathbb{C}}
\def\FF{\mathbb{F}}
\def\NN{\mathbb{N}}
\def\PP{\mathbb{P}}
\def\QQ{\mathbb{Q}}
\def\ZZ{\mathbb{Z}}
\def\calO{\mathcal{O}}
\begin{document}
\title{Homogeneous prime elements in normal two-dimensional graded rings}

\author{Anurag K. Singh}
\address{Department of Mathematics, University of Utah, 155 South 1400 East, Salt Lake City, UT~84112, USA}
\email{singh@math.utah.edu}

\author{Ryo Takahashi}
\address{Graduate School of Mathematics, Nagoya University, Furocho, Chikusaku, Nagoya, 464-8602, Japan}
\email{takahashi@math.nagoya-u.ac.jp}

\author{Kei-ichi Watanabe}
\address{Department of Mathematics, College of Humanities and Sciences, Nihon University, Setagaya-ku, Tokyo, 156-8550, Japan}
\email{watanabe@math.chs.nihon-u.ac.jp}

\thanks{A.K.S.~was supported by NSF grants DMS~1500613 and DMS~1801285, R.T.~by JSPS Grant-in-Aid for Scientific Research 16H03923 and 16K05098, and K.W.~by Grant-in-Aid for Scientific Research 26400053. This paper started from conversations between R.T.~and K.W.~at the program \emph{Commutative algebra and related topics} at Okinawa, Japan, supported by MSRI, OIST, and RIMS, Kyoto University. The authors are grateful to these institutes, and to the organizers of the program for providing a very comfortable research atmosphere.}

\dedicatory{Dedicated to Professor Craig Huneke on the occasion of his sixty-fifth birthday}

\begin{abstract}
We prove necessary and sufficient conditions for the existence of homogeneous prime elements in normal $\NN$-graded rings of dimension two, in terms of rational coefficient Weil divisors on projective curves.
\end{abstract}
\maketitle

\section{Introduction}

We investigate the existence of homogeneous prime elements, equivalently, of homogeneous principal prime ideals, in normal $\NN$-graded rings $R$ of dimension two. It turns out that there are elegant necessary as well as sufficient conditions for the existence of such prime ideals in terms of rational coefficient Weil divisors, i.e., $\QQ$-divisors, on $\Proj R$.

When speaking of an \emph{$\NN$-graded ring $R$}, we assume throughout this paper that $R$ is a finitely generated algebra over its subring $R_0$, and that $R_0$ is an algebraically closed field. We say that an~$\NN$-grading on $R$ is \emph{irredundant} if
\[
\gcd\{n\in\NN\mid R_n \ne 0\}\ =\ 1.
\]
Relevant material on $\QQ$-divisors is summarized in \S\ref{sec:q:divisors}. Our main result is: 

\begin{theorem}
\label{theorem:main}
Let $R$ be a normal ring of dimension $2$, with an irredundant $\NN$-grading, where $R_0$ is an algebraically closed field. Set $X\colonequals\Proj R$, and let~$D$ be a $\QQ$-divisor on $X$ such that $R=\oplus_{n\ge0} H^0(X,\calO_X(nD))T^n$. Let $d$ be a positive integer.

\begin{enumerate}[\ \rm(1)]
\item Suppose $x\in R_d$ is a prime element. Set
\[
s\colonequals\gcd\{n\in\NN\mid {[R/xR]}_n \ne 0\}.
\]
Then the integers $d$ and $s$ are relatively prime, and the divisor $sdD$ is linearly equivalent to a point of $X$. In particular, $\deg D=1/sd$.

\item Conversely, suppose $dD$ is linearly equivalent to a point $P$ with $P\notin\supp(\ff(D))$. Let $g$ be a rational function on $X$ with
\[
\divv(g)\ =\ P-dD.
\]
Then $x\colonequals gT^d$ is a prime element, and the induced grading on~$R/xR$ is irredundant.
\end{enumerate}
\end{theorem}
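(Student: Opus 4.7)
The plan is to introduce, at a chosen point $P \in X$, the non-negative additive function
\[
w(fT^n)\ \colonequals\ v_P(f) + n v_P(D)
\]
on homogeneous elements of $R$; non-negativity follows from $fT^n \in R_n$ meaning $f \in H^0(X, \calO_X(nD))$, and additivity is immediate. The key computational input is that $fT^n \in R_n$ is divisible by $x = gT^d$ precisely when $w(fT^n) \ge w(x)$: the divisibility check $(f/g)T^{n-d}\in R_{n-d}$ at each $Q \ne P$ reduces to the automatic inequality from $f \in H^0(\calO_X(nD))$, and at $Q = P$ it becomes the $w$-bound.

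For (2), since $\divv(g) = P - dD$ and $v_P(D) \in \ZZ$ (as $P \notin \supp(\ff(D))$), I compute $w(x) = 1$. A nonzero element of $R/xR$ is represented by some $fT^n$ with $w(fT^n) = 0$, so by multiplicativity a product of such elements still has $w = 0 < 1$ and remains nonzero in $R/xR$; hence $x$ is prime. Irredundance of the induced grading follows from Riemann--Roch for $\QQ$-divisors: for large $n$, $h^0(\calO_X(nD)) - h^0(\calO_X(nD - P)) = 1$, so $(R/xR)_n \ne 0$ and its support contains every sufficiently large integer.

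For (1), normality and $\dim R = 2$ make $R/xR$ a $1$-dimensional graded domain with $(R/xR)_0 = k$ algebraically closed; this forces $\dim_k(R/xR)_n \le 1$ for all $n$ and makes $\Proj(R/xR)$ a single closed point $P \in X$. Writing $x = gT^d$, a short computation with the graded divisorial valuations on $R$ (which exist by normality) shows that the effective $\QQ$-divisor $\divv(g)+dD$ is supported exactly at $P$, so $\divv(g)+dD = cP$ for some $c \in \QQ_{>0}$; as a by-product, $d v_Q(D) \in \ZZ$ for every $Q \ne P$. Setting $v_P(D) = p/q$ in lowest terms gives $c \in \frac{1}{q}\ZZ_{>0}$, and again $xR \cap R_n = \{fT^n : w(fT^n) \ge c\}$. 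The relation $\gcd(d, s) = 1$ drops out of irredundance: every $r \in R_n$ has the form $x^k r_k$ with $r_k \notin xR$, so $n \equiv kd \pmod s$, placing $\{n : R_n \ne 0\}$ inside the numerical semigroup generated by $d$ and $s$, whose gcd must therefore equal $1$.

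The main obstacle, and the heart of the argument, is pinning down $c = 1/s$. For $R/xR$ to be a domain, every pair of nonzero homogeneous $a, b \in R/xR$ must satisfy $w(a) + w(b) < c$; but in sufficiently large degree the $w$-values of nonzero elements of $R/xR$ saturate the maximum $c - 1/q$, forcing $2(c - 1/q) < c$ and hence $c < 2/q$. Combined with $c \in \frac{1}{q}\ZZ_{>0}$, this yields $c = 1/q$. A direct count then shows $(R/xR)_n \ne 0$ (for large $n$ with $R_n \ne 0$) exactly when $q \mid n$, whence $s = q$, and therefore $sdD \sim scP = P$ with $\deg D = c/d = 1/(sd)$.
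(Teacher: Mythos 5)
Your argument is correct, and for part (1) it takes a genuinely different route from the paper. The paper first reduces to the case $s=1$ via a Veronese-subring lemma, invokes Tomari's multiplicity formula to get $\deg D=1/d$, and then uses the rank jump $\rank R_{md+d}=1+\rank R_{md}$ for $m\gg0$ with $mdD$ integral to conclude that $\lf\divv(g)+dD\rf\neq0$, so that the effective degree-one divisor $\divv(g)+dD$ is a point. You instead work locally at the point $P$ underlying the height-one prime $xR$: you identify $\divv(g)+dD=cP$, and use additivity of $w=v_P(\cdot)+n\,v_P(D)$ together with primality of $xR$ to force $c=1/q$, recovering $\deg D=1/(sd)$ as a corollary rather than using Tomari as input; this is arguably more self-contained and explains \emph{why} the coefficient is $1/s$. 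Your part (2) is essentially the paper's argument (the ideal $\oplus_n H^0(X,\calO_X(nD-P))T^n$ and the integrality of $v_P(D)$ forcing $w=0$ on nonzero classes), with irredundance obtained from Riemann--Roch in place of Tomari. Two spots deserve more care than your sketch gives them. First, the step ``$V_+(x)=\{P\}$ implies $\divv(g)+dD$ is supported exactly at $P$'' rests on the Demazure dictionary identifying the nonzero homogeneous height-one primes of $R$ not containing $R_+$ with the valuation ideals $\{fT^n : v_Q(f)+n\,v_Q(D)>0\}$ for closed points $Q\in X$; this is standard but is a real input the paper's proof avoids, and should be stated and justified. Second, the ``saturation'' claim needs its Riemann--Roch computation spelled out: one must exhibit $n\gg0$ with $np\equiv qc-1\pmod q$ (possible since $\gcd(p,q)=1$) so that $h^0(\lf nD-(c-\tfrac1q)P\rf)=h^0(\lf nD-cP\rf)+1$, producing a nonzero class of $w$-value exactly $c-1/q$; squaring it then gives $2(c-1/q)<c$ and hence $c=1/q$. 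Both gaps are fillable, so the proof stands.
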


The proof of the theorem and further results regarding the number of homogeneous principal prime ideals are included in \S\ref{sec:main}. We next record various examples.

\begin{example}
\label{example:standard}
If a standard $\NN$-graded ring $R$, as in the theorem, has a homogeneous prime element, we claim that $R$ must be a polynomial ring over $R_0$.

If $x\in R_d$ is a prime element, the theorem implies that $d=1$. Independent of the theorem, note that~$R/xR$ is an $\NN$-graded domain of dimension $1$, with ${[R/xR]}_0$ algebraically closed, so~$R/xR$ is a numerical semigroup ring by \cite[Proposition~2.2.11]{GW}. Since it is standard graded, $R/xR$ must be a polynomial ring. But then $R$ is a polynomial ring as well.
\end{example}

\begin{example}
The hypothesis that the underlying field $R_0$ is algebraically closed is crucial in Theorem~\ref{theorem:main} and Example~\ref{example:standard}: the standard graded ring $\QQ[x,y,z]/(x^2+y^2+z^2)$ has a homogeneous prime element $x$.
\end{example}

\begin{example}
In view of Example~\ref{example:standard}, the ring $R\colonequals\CC[x,y,z]/(x^2-yz)$, with the standard $\NN$-grading, has no homogeneous prime element. However, for nonstandard gradings, there can be homogeneous prime elements:

Fix such a grading with $\deg x=a$, $\deg y=b$, and $\deg z=2a-b$, where $\gcd(a,b)=1$ and $b$ is even. Then one has a homogeneous prime element
\[
y^{a-b/2}-z^{b/2},
\]
which generates the kernel of the $\CC$-algebra homomorphism $R\longrightarrow\CC[t]$ with
\[
x\longmapsto t^a,\quad y\longmapsto t^b,\quad z\longmapsto t^{2a-b}.
\]
\end{example}

\begin{example}
The ring $\CC[x,y,z]/(x^4+y^2z+xz^2)$, with $\deg x=4$, $\deg y=5$, and $\deg z=6$, has no homogeneous prime elements in view of Theorem~\ref{theorem:main}\,(1), since the corresponding $\QQ$-divisor has degree $2/15$ by Proposition~\ref{proposition:Tomari}.
\end{example}

\begin{example}
\label{example:236}
Consider $\CC[x,y,z]/(x^2+y^3+z^6)$, with $\deg x=3$, $\deg y=2$, and $\deg z=1$. Then $(z)$ is the unique homogeneous principal prime ideal: the corresponding $\QQ$-divisor has degree $1$, again by Proposition~\ref{proposition:Tomari}.
\end{example}

\begin{example}
Set $R\colonequals\CC[x,y,z]/(x^2-y^3+z^7)$, with $\deg x=21$, $\deg y=14$, and $\deg z=6$. Then the corresponding $\QQ$-divisor has degree $1/42$ by Proposition~\ref{proposition:Tomari}, so the degree of a homogeneous prime element must divide $42$. In view of the degrees of the generators of~$R$, the possibilities are $6$, $14$, $21$, and $42$, and indeed there are prime elements with each of these degrees, namely $z$, $y$, $x$, and $y^3-\lambda x^2$ for scalars $\lambda\neq 0,1$, see also Example~\ref{example:237}. 
\end{example}

\section{Rational coefficient Weil divisors}
\label{sec:q:divisors}

We review the construction of normal graded rings in terms of $\QQ$-divisors; this is work of Dolga\v cev~\cite{Dolgachev}, Pinkham~\cite{Pinkham}, and Demazure~\cite{Demazure}. Let $X$ be a normal projective variety. A \emph{$\QQ$-divisor} on $X$ is a $\QQ$-linear combination of irreducible subvarieties of $X$ of codimension one. Let $D=\sum n_iV_i$ be such a divisor, where $n_i\in\QQ$, and $V_i$ are distinct. Set
\[
\lf D\rf\colonequals\sum\lf n_i\rf V_i,
\]
where $\lf n\rf$ is the greatest integer less than or equal to $n$. We define
\[
\calO_X(D)\colonequals\calO_X(\lf D\rf).
\]
The divisor $D$ is \emph{effective}, denoted $D\ge 0$, if each $n_i$ is nonnegative. The \emph{support} of the fractional part of $D$ is the set
\[
\supp(\ff(D))\colonequals\{V_i\mid n_i\notin\ZZ\}.
\]

Let $K(X)$ denote the field of rational functions on $X$. Each $g\in K(X)$ defines a Weil divisor~$\divv(g)$ by considering the zeros and poles of $g$ with appropriate multiplicity. As these multiplicities are integers, it follows that for a $\QQ$-divisor $D$ one has
\begin{multline*}
H^0(X,\calO_X(\lf D\rf)) = \{g\in K(X)\mid\divv(g)+\lf D\rf \ge 0 \} \\
=\ \{g\in K(X)\mid\divv(g)+ D\ge 0 \} = H^0(X,\calO_X(D)).
\end{multline*}

A $\QQ$-divisor $D$ is \emph{ample} if $ND$ is an ample Cartier divisor for some $N\in\NN$. In this case, the \emph{generalized section ring} $R(X,D)$ is the $\NN$-graded ring 
\[
R(X,D)\colonequals\oplus_{n\ge0}H^0(X,\calO_X(nD))T^n,
\]
where $T$ is an element of degree $1$, transcendental over $K(X)$.

\begin{theorem}[{\cite[3.5]{Demazure}}]
\label{theorem:Demazure}
Let $R$ be an $\NN$-graded normal domain that is finitely generated over a field $R_0$. Let $T$ be a homogeneous element of degree $1$ in the fraction field of $R$. Then there exists a unique ample $\QQ$-divisor $D$ on $X\colonequals\Proj R$ such that
\[
R_n\ =\ H^0(X,\calO_X(nD))T^n\quad\text{ for each }n\ge 0.
\]
\end{theorem}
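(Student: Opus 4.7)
The plan is to recover $D$ from the graded structure of $R$ by reading off its coefficient at each prime divisor from the valuation data of $R$. Since $T$ has degree~$1$ in $\operatorname{Frac}(R)$, multiplication by $T^n$ identifies the degree-$n$ piece of $\operatorname{Frac}(R)$ with its degree-$0$ piece, which is precisely the function field $K(X)$. Consequently each $R_n$ embeds into $K(X)$ via $f\mapsto f/T^n$, and the task reduces to describing this image as a space of global sections $H^0(X,\calO_X(nD))$.

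To define $D$, I would work with height-one homogeneous primes of $R$. Assuming (as we may) that $\dim R\ge 2$, no such prime contains the irrelevant ideal $R_+$, and the correspondence $P\mapsto V(P)$ yields a bijection with the prime divisors on $X$. Normality gives $R=\bigcap_P R_P$ with each $R_P$ a DVR whose valuation $v_P$ restricts to $K(X)$ as $e_V\cdot\operatorname{ord}_V$ for a positive integer $e_V$, where $\operatorname{ord}_V$ is the normalized valuation of the DVR $\calO_{X,V}$. Set
\[
n_V\colonequals v_P(T)/e_V\in\QQ,\qquad D\colonequals\sum_V n_V\,V.
\]
Writing $T=f/g$ for some homogeneous $f,g\in R$ forces $v_P(T)=0$ outside the finitely many $V$ appearing in $\divv(f)$ or $\divv(g)$, so $D$ is a genuine $\QQ$-divisor.

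The identity $R_n=H^0(X,\calO_X(nD))T^n$ then falls out by unwinding definitions: for $h\in K(X)$, the element $hT^n$ lies in $R$ iff $v_P(hT^n)\ge 0$ for every height-one homogeneous $P$, and since $v_P(hT^n)=e_V\operatorname{ord}_V(h)+n\,v_P(T)$, this rewrites as $\operatorname{ord}_V(h)+n\,n_V\ge 0$ for every prime divisor $V$, i.e., $\divv(h)+nD\ge 0$. For ampleness, choose a positive integer $d$ divisible by the degrees of a homogeneous generating set of $R$; then $R^{(d)}\colonequals\oplus_n R_{nd}$ is generated over $R_0$ by $R_d$, its Proj agrees with $X$, and $\calO_X(dD)$ is identified with the very ample Serre twist $\calO_X(d)$ coming from this standard graded presentation, so $D$ is ample.

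Uniqueness then comes from the same local formula: any other $\QQ$-divisor $D'$ with $R_n=H^0(X,\calO_X(nD'))T^n$ must, by computing $v_P$ on a nonzero homogeneous element of $R_m$ via both presentations, satisfy $n_V'=n_V$ at each prime divisor, forcing $D'=D$. The main obstacle is the algebro-geometric bookkeeping: confirming the bijection between height-one homogeneous primes of $R$ and prime divisors on $X$, extracting each $e_V$ as a positive integer from the graded-to-ungraded transition of valuations, and verifying that some integer multiple $dD$ is a Cartier divisor whose line bundle matches the standard ample twist from the Proj construction.
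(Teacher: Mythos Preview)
The paper does not contain a proof of this theorem: it is stated with a citation to Demazure~\cite[3.5]{Demazure} and used as a black box throughout. There is therefore nothing in the paper to compare your argument against.

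That said, your sketch is essentially Demazure's own construction: recover the coefficient of $D$ at each prime divisor $V$ from the valuation $v_P$ on the graded DVR $R_P$, where $P$ is the corresponding height-one homogeneous prime, and then verify the section identity and ampleness. The outline is sound. One point that deserves a little more care is the uniqueness step: the phrase ``computing $v_P$ on a nonzero homogeneous element of $R_m$ via both presentations'' does not by itself pin down the individual coefficient $n_V'$; what you actually want is that for $n$ large with $nD$ and $nD'$ integral and base-point-free, the common subspace $H^0(X,\calO_X(nD))=H^0(X,\calO_X(nD'))\subset K(X)$ recovers each coefficient of the divisor as $-\min_h\operatorname{ord}_V(h)$, forcing $nD=nD'$ as divisors (not merely up to linear equivalence). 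With that clarification the argument goes through.
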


The following result is due to Tomari:

\begin{proposition}[{\cite[Proposition~2.1]{Tomari}}]
\label{proposition:Tomari}
For $R$ and $D$ as in the theorem above, one has
\[
\lim_{t\to 1}\,(1-t)^{\dim R} P(R,t)\ =\ (\deg D)^{\dim R-1},
\]
where $P(R,t)$ is the Hilbert series of $R$.
\end{proposition}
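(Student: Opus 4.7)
The plan is to combine Demazure's identification $R_n = H^0(X, \calO_X(\lf nD \rf))T^n$ with Riemann--Roch on the curve $X = \Proj R$ to obtain an asymptotic formula for $\dim_{R_0} R_n$, and then read off the residue of $P(R, t)$ at $t = 1$. Since this paper is devoted to the case $\dim R = 2$, I focus on that case, in which $X$ is a smooth projective curve (any normal curve is smooth) of some genus $g$.

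First I would choose a positive integer $N$ so that $ND$ is Cartier and ample. Since $D$ is ample, $\deg D > 0$, so for all sufficiently large $n$ one has $\deg \lf nD \rf > 2g - 2$; Serre duality then forces $h^1(X, \calO_X(\lf nD \rf)) = 0$, and Riemann--Roch gives
\[
\dim_{R_0} R_n \ =\ \deg \lf nD \rf + 1 - g.
\]
Writing $D = \sum a_i P_i$ with $a_i \in \QQ$, one has $\deg \lf nD \rf = n \deg D - \varepsilon(n)$, where $\varepsilon(n) \colonequals \sum (n a_i - \lf n a_i \rf)\deg P_i$ is bounded and eventually periodic in $n$, with period dividing $N$.

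Substituting into the Hilbert series and separating the linear and bounded parts yields
\[
P(R, t) \ =\ \deg D \cdot \frac{t}{(1-t)^2} + B(t) + Q(t),
\]
where $Q(t)$ is a polynomial accounting for small $n$, and $B(t) = \sum_{n \gg 0}(1 - g - \varepsilon(n)) t^n$ is the generating function of an eventually periodic, bounded sequence, hence a rational function whose denominator divides a power of $1 - t^N$. In particular $B(t)$ has at most a simple pole at $t = 1$, so $(1-t)^2 B(t) \to 0$ as $t \to 1^-$, and the limit in question equals $\deg D$. This matches $(\deg D)^{\dim R - 1}$ since $\dim R - 1 = 1$.

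The main subtlety is the oscillation introduced by the floor function, but reducing it to the standard fact that an eventually periodic sequence yields a rational generating function with controlled poles disposes of this cleanly. The higher-dimensional version would follow by the same strategy, invoking asymptotic Riemann--Roch (Snapper's theorem on the polynomial behaviour of $\chi(X, \calO_X(nD))$) in place of the curve version, though that case is not needed in this paper.
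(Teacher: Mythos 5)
Your argument is correct, and it is worth noting that the paper itself offers no proof of this proposition: it is quoted verbatim from Tomari's paper, so there is no internal argument to compare against. Your Riemann--Roch computation is a clean, self-contained verification of the only case the paper actually uses, namely $\dim R=2$: the decomposition $\dim_{R_0}R_n = n\deg D + (1-g-\varepsilon(n))$ for $n\gg 0$, with $\varepsilon(n)$ periodic and bounded, does give $(1-t)^2P(R,t)=\deg D\cdot t + (1-t)^2B(t)+(1-t)^2Q(t)$, and the boundedness of the coefficients of $B$ alone already forces $(1-t)^2B(t)\to 0$ as $t\to 1^-$ (you do not even need the periodicity or the precise shape of the denominator of $B$, which in any case is $1-t^N$ to the first power for an eventually periodic sequence, not a higher power). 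Be aware, though, that the proposition as stated is for arbitrary $\dim R$, and your proof covers only the curve case; the general statement, which is what Tomari proves (by multiplicity-theoretic methods for filtered rings), would indeed require the asymptotic Riemann--Roch argument you sketch at the end, so what you have is a complete proof of the special case needed here rather than of the cited result in full generality.
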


\section{Homogeneous prime elements}
\label{sec:main}

Before proceeding with the proof of the main theorem, we record a lemma:

\begin{lemma}
\label{lemma:veronese}
Let $R$ be a domain with an irredundant $\NN$-grading. Let $x$ be a nonzero element of degree $d>0$, and set
\[
s\colonequals\gcd\{n\in\NN\mid {[R/xR]}_n \ne 0\}.
\]
Then $\gcd(d,s)=1$. Moreover, $x$ is a prime element of $R$ if and only if $x^s$ is a prime element of the Veronese subring $R^{(s)}\colonequals\oplus_{n\ge 0}R_{ns}$. 
\end{lemma}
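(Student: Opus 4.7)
The two claims are logically independent, and I would prove them in order. The case $s=1$ is trivial, so I silently assume $s\ge 2$.

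For $\gcd(d,s)=1$, the plan is to show by induction on $n$ that every $n\in\NN$ with $R_n\ne 0$ lies in the subgroup $d\ZZ+s\ZZ=\gcd(d,s)\ZZ$ of $\ZZ$; the irredundancy hypothesis then forces $\gcd(d,s)=1$. In the inductive step, if $s\mid n$ there is nothing to prove; otherwise $[R/xR]_n=0$ by the definition of $s$, so $R_n=xR_{n-d}$, hence $R_{n-d}\ne 0$, and the induction hypothesis places $n-d$ and therefore $n$ in $d\ZZ+s\ZZ$.

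For the primality equivalence, my plan is to construct a natural ring isomorphism $R/xR\cong R^{(s)}/x^sR^{(s)}$; once this is in hand, the two quotients are domains simultaneously (both are visibly nonzero, since $x$ and $x^s$ sit in positive degree), and the ``if and only if'' drops out. I would obtain the isomorphism from the composition $R^{(s)}\hookrightarrow R\twoheadrightarrow R/xR$. Surjectivity is automatic: since $R/xR$ is concentrated in degrees divisible by $s$, each homogeneous class lifts to an element of $R^{(s)}$. The kernel is $R^{(s)}\cap xR$, and the entire content of the proof boils down to the identification
\[
R^{(s)}\cap xR\ =\ x^sR^{(s)}.
\]

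This identification is the step I expect to be the main obstacle. The containment $\supseteq$ is immediate. For $\subseteq$, I would take a homogeneous element $z=xy\in R^{(s)}\cap xR$ of degree $ks$, with $y\in R_{ks-d}$, and extract $s-1$ additional factors of $x$ from $y$. Since $\gcd(d,s)=1$ (from the first part), the integer $ks-jd$ is not divisible by $s$ for any $j=1,\dots,s-1$, so the relation $R_n=xR_{n-d}$, valid whenever $s\nmid n$, can be iterated without stalling, and first breaks when the total exponent of $x$ reaches $s$. This yields $R_{ks-d}=x^{s-1}R_{ks-sd}$, so $z=x^s y''$ with $y''\in R_{(k-s)s}\subseteq R^{(s)}$, as required.
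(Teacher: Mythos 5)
Your proposal is correct and follows essentially the same route as the paper: both parts rest on the vanishing $[R/xR]_n=0$ for $s\nmid n$, and your key identity $R^{(s)}\cap xR=x^sR^{(s)}$ is exactly the paper's claim $(xR)^{(s)}=x^sR^{(s)}$, which it proves by a maximal-exponent argument equivalent to your iterated extraction of factors of $x$ (the paper phrases part one via Hilbert series, but the content is the same). The only slip is typographical: at the last step $y''$ lies in $R_{(k-d)s}$, not $R_{(k-s)s}$; this is harmless since all that matters is that the degree is a multiple of $s$.
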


\begin{proof}
Note that $P(R/xR,t)$ is a rational function of $t^s$, and that
\[
P(R,t)\ =\ \frac{1}{1-t^d}P(R/xR,t).
\]
Since the grading on $R$ is irredundant, it follows that $\gcd(d,s)=1$.

We claim that $(xR)^{(s)} = x^sR^{(s)}$. Choose a homogeneous element of $(xR)^{(s)}$, and express it as $rx^m$ with~$m$ largest possible. Suppose $m$ is not a multiple of $s$. By considering its degree, we see that the image of $r$ must be $0$ in $R/xR$, contradicting the maximality of $m$. It follows that $(xR)^{(s)} \subseteq x^sR^{(s)}$, the reverse containment being trivial. Hence
\[
R/xR\ =\ (R/xR)^{(s)}\ =\ R^{(s)}/x^sR^{(s)},
\]
which gives the desired equivalence.
\end{proof}

\begin{proof}[Proof of Theorem~\ref{theorem:main}]
For a prime element $x\in R_d$, the ring~$R/xR$ is an~$\NN$-graded domain of dimension $1$, over an algebraically closed field, so \cite[Proposition~2.2.11]{GW} implies that it is isomorphic to a numerical semigroup ring. Take $s$ as in Lemma~\ref{lemma:veronese}. Since the Veronese subring $R^{(s)}$ corresponds to the $\QQ$-divisor $sD$, the proof of (1) reduces using the lemma to the case where $s=1$. In this case,
\[
P(R/xR,t)\ =\ \frac{1}{1-t} - p(t)
\]
for $p(t)$ a polynomial, so
\[
P(R,t)\ =\ \frac{1}{1-t^d}\left(\frac{1}{1-t} - p(t)\right),
\]
and Proposition~\ref{proposition:Tomari} shows that $\deg D =1/d$. To complete the proof of (1), it remains to verify that $dD$ is linearly equivalent to a point of $X$.

The exact sequence
\[
\CD
0 @>>> R(-d) @>x>> R @>>> R/xR @>>> 0
\endCD
\]
shows that for $n\gg0$ one has
\[
\rank R_{n+d}\ =\ 1+\rank R_n.
\]
Choose $m\gg0$ such that $mdD$ is integral, and the above holds with $n=md$, i.e.,
\[
\rank H^0(X,\calO_X(mdD+dD))\ =\ 1 + \rank H^0(X,\calO_X(mdD)).
\]
Let $g$ be a rational function on $X$ such that $x=gT^d$. Then $\divv(g)+dD\ge 0$, and
\[
\rank H^0(X,\calO_X(mdD+\divv(g)+dD))\ =\ 1 + \rank H^0(X,\calO_X(mdD)).
\]
Since $mdD$ is an integral divisor, it follows that
\[
\lf \divv(g)+dD\rf\ \neq\ 0.
\]
Bearing in mind that $\divv(g)+dD$ is an effective divisor of degree $1$, it follows that
\[
\divv(g)+dD\ =\ P,
\]
for $P$ a point of $X$.

For (2), we claim that
\[
xR\ =\ \oplus_{n\ge 0} H^0(X,\calO_X(nD-P))T^n,
\]
and that this is a prime ideal of $R$. Note that homogeneous elements of $xR$ have the form
\[
gT^dhT^m
\]
for $hT^m\in R$, i.e., with $h$ satisfying
\[
\divv(h)+mD\ \ge\ 0.
\]
Since $\divv(g)=P-dD$, the above condition is equivalent to
\[
\divv(gh)+(m+d)D-P\ \ge\ 0,
\]
i.e., to the condition that $gh\in H^0(X,\calO_X((m+d)D-P))$. This proves the claim.

To verify that the ideal $xR$ is prime, consider $h_iT^{m_i}$ in $R\smallsetminus xR$, for $i=1,2$. Then
\[
\divv(h_i)+m_iD\ \ge\ 0 \quad\text{whereas}\quad \divv(h_i)+m_iD-P\ \not\ge\ 0.
\]
Since $P$ is not in the support of the fractional part of $D$, it follows that
\[
\divv(h_1h_2)+(m_1+m_2)D-P\ \not\ge\ 0,
\]
and hence that $h_1h_2T^{m_1+m_2}\notin xR$. Thus, $xR$ is indeed prime. It remains
to prove that the grading on $R/xR$ is irredundant. Set
\[
s\colonequals\gcd\{n\in\NN\mid {[R/xR]}_n \ne 0\},
\]
in which case
\[
P(R/xR,t)\ =\ \frac{1}{1-t^s} - p(t^s),
\]
where $p(t^s)$ is a polynomial in $t^s$, and
\[
P(R,t)\ =\ \frac{1}{1-t^d}\left(\frac{1}{1-t^s} - p(t^s)\right).
\]
Proposition~\ref{proposition:Tomari} gives the second equality below,
\[
\frac{1}{ds}\ =\ \lim_{t\to 1}\, (1-t)^2 P(R,t)\ =\ \deg D\ =\ \frac{1}{d},
\]
implying that $s=1$.
\end{proof} 

\begin{example}
\label{example:222}
Take $\PP^1\colonequals\Proj\CC[u,v]$, with points parametrized by $u/v$, and set
\[
D\colonequals \frac{1}{2}(0)+\frac{1}{2}(\infty)-\frac{1}{2}(1).
\]
Then $R\colonequals R(\PP^1,D)$ is the $\CC$-algebra generated by 
\[
x\colonequals\frac{u-v}{v}T^2, \quad y\colonequals\frac{u-v}{u}T^2, \quad z\colonequals\frac{(u-v)^2}{uv}T^3,
\]
i.e., $R$ is the hypersurface $\CC[x,y,z]/(z^2-xy(x-y))$, with $\deg x=2=\deg y$, and $\deg z=3$. Note that $\deg D=1/2$, and that $2D$ is an integral divisor. Theorem~\ref{theorem:main}\,(2) shows that
\[
\oplus_{n\ge 0} H^0(X,\calO_X(nD-P))T^n
\]
is a prime ideal for $P\in\PP^1\smallsetminus\{0,\infty,1\}$. Indeed, for $P=[\lambda:1]$ with $\lambda\neq0,1$, the displayed ideal is the prime $(x-\lambda y)R$. These are precisely the homogeneous principal prime ideals of $R$, with the points $0$, $\infty$, and $1$ that belong to $\supp(\ff(D))$ corresponding respectively to the ideals $xR$, $yR$ and $(x-y)R$ that are not prime.
\end{example}

\begin{remark}
\label{remark:P1}
Let $D$ be a $\QQ$-divisor on $\PP^1$ such that $\deg D=1/d$ where $d$ is a positive integer, and $dD$ is integral. Then the ring $R\colonequals R(\PP^1,D)$ has infinitely many distinct homogeneous principal prime ideals: all points of $\PP^1$ are linearly equivalent, so for each point~$P$ there exists a rational function $g$ with
\[
\divv(g)\ =\ P-dD,
\]
and Theorem~\ref{theorem:main}\,(2) implies that $gT^dR$ is a prime ideal for each point $P$ with
\[
P\ \in\ \PP^1\setminus\supp(\ff(D)).
\]
This explains the infinitely many prime ideals in Example~\ref{example:222}, and also in Example~\ref{example:237} below; the latter, moreover, has homogeneous prime elements of different degrees:
\end{remark}

\begin{example}
\label{example:237}
On $\PP^1\colonequals\Proj\CC[u,v]$, consider the $\QQ$-divisor
\[
D\colonequals \frac{1}{2}(\infty)-\frac{1}{3}(0)-\frac{1}{7}(1).
\]
Then $R\colonequals R(\PP^1,D)$ is the ring $\CC[x,y,z]/(x^2-y^3+z^7)$, where
\[
z\colonequals\frac{u^2(u-v)}{v^3}T^6,
\quad y\colonequals\frac{u^5(u-v)^2}{v^7}T^{14}, 
\quad x\colonequals\frac{u^7(u-v)^3}{v^{10}}T^{21}.
\]
For each point $P=[\lambda:1]$ in $\PP^1\smallsetminus\{0,\infty,1\}$, i.e., with $\lambda\neq0,1$, one has a prime ideal
\[
\oplus_{n\ge 0} H^0(X,\calO_X(nD-P))T^n\ =\ (y^3-\lambda x^2)R.
\]
These, along with $xR$, $yR$, and $zR$, are precisely the homogeneous principal prime ideals.
\end{example}

\begin{example}
\label{example:ec1}
Set $X$ to be the elliptic curve $\Proj\CC[u,v,w]/(v^2w-u^3+w^3)$. Then
\[
\divv(v/w)\ =\ P_1+P_2+P_3-3O,
\]
where $O=[0:1:0]$ is the point at infinity, and 
\[
P_1=[1:0:1], \quad P_2=[\theta:0:1], \quad P_3=[\theta^2:0:1],
\]
for $\theta$ a primitive cube root of unity. Take
\[
D\colonequals\frac{1}{2}P_1+\frac{1}{2}P_2+\frac{1}{2}P_3-O.
\]
The ring $R\colonequals R(X,D)$ has generators
\[
x\colonequals\frac{w}{v}T^2, \quad y\colonequals\frac{w}{v}T^3, \quad z\colonequals\frac{uw}{v^2}T^4,
\]
so $R=\CC[x,y,z]/(x^6+y^4-z^3)$. Since $\deg D=1/2$, the only possible homogeneous prime elements are in degree $2$. Indeed,
\[
2D\ =\ P_1+P_2+P_3-2O\ =\ \divv(v/w)+O,
\]
and $O\notin\supp(\ff(D))$, so $(w/v)T^2=x$ is a prime element; note that $xR$ is the \emph{unique} homogeneous principal prime ideal of $R$, in contrast with Examples~\ref{example:222} and~\ref{example:237}.
\end{example}

\begin{example}
\label{example:ec2}
With $X$ and $O$ as in the previous example, note that
\[
\divv(u/w)\ =\ Q_1+Q_2-2O,
\]
where
\[
Q_1=[0:i:1], \quad Q_2=[0:-i:1].
\]
Consider the $\QQ$-divisor
\[
D\ =\ \frac{1}{2}Q_1+\frac{1}{2}Q_2-\frac{1}{2}O.
\]
Then the ring $R\colonequals R(X,D)$ has generators
\[
x\colonequals\frac{w}{u}T^2, \quad y\colonequals\frac{w}{u}T^3, \quad z\colonequals\frac{w}{u}T^4, \quad t\colonequals\frac{vw^2}{u^3}T^6,
\]
and presentation
\[ 
R\ =\ \CC[x,y,z,t]/(y^2-xz,\ x^6-z^3+t^2).
\]
Once again, since $\deg D=1/2$, the only possibility for homogeneous prime elements is in degree $2$. We see that
\[
2D\ =\ Q_1+Q_2-O\ =\ \divv(u/w)+O.
\]
However, since $O\in\supp(\ff(D))$, Theorem~\ref{theorem:main}\,(2) does not apply. Indeed, $(w/u)T^2=x$ is not a prime element. The key point is that $X$ is not rational, and there does not exist a point $P$, linearly equivalent to $2D$, with $P\notin\supp(\ff(D))$.
\end{example}

\section{Rational singularities}

Let $H$ be a numerical semigroup. For $\FF$ a field and $t$ an indeterminate, set
\[
\FF[H]\colonequals\FF[t^n\mid n\in H].
\]

\begin{question}
Does $\FF[H]$ deform to a normal $\NN$-graded ring, i.e., does there exist a normal $\NN$-graded ring $R$, with $x\in R$ homogeneous, such that $R/xR\cong \FF[H]$?
\end{question}

\begin{question}
For which numerical semigroups $H$ does there exist $R$, as above, such that~$R$ has rational singularities?
\end{question}

The following is a partial answer:

\begin{proposition}
\label{proposition:rat:sing}
Let $R$ be a normal ring of dimension $2$, with an irredundant $\NN$-grading, where $R_0=\FF$ is an algebraically closed field. Suppose $x_0$ is a homogeneous prime element that is part of a minimal reduction of $R_+$, and that the induced grading on $R/x_0R$ is irredundant. Then the following are equivalent:
\begin{enumerate}[\ \rm(1)]
\item The ring $R$ has rational singularities.

\item There exist minimal $\FF$-algebra generators $x_0,\dots,x_r$ for $R$, with $x_i$ homogeneous, and
\begin{equation}
\label{equation:rat:sing}
r+\deg x_0 > \deg x_1 > \cdots > \deg x_r=r.
\end{equation}
\end{enumerate}
\end{proposition}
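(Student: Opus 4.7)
The plan is to translate both conditions into numerical data attached to the numerical semigroup defined by $R/x_0R$, and then invoke a characterization of rational singularities in the two-dimensional graded setting. By the irredundant grading on $R/x_0R$ together with \cite[Proposition~2.2.11]{GW}, there is an isomorphism $R/x_0R\cong\FF[H]$ for a numerical semigroup $H$ with $\gcd(H)=1$; write its minimal generators in increasing order as $a_1<\cdots<a_m$. If $(x_0,y)$ is the minimal reduction of $R_+$, the image of $y$ generates a minimal reduction of $\FF[H]_+$, necessarily the ideal $(t^{a_1})$, so $\deg y=a_1$. Since $x_0$ is a prime element, degree considerations on any potential relation $x_0=\sum r_is_i$ with $r_i,s_i\in R_+$ homogeneous force $x_0\notin R_+^2$, so $x_0$ is itself a minimal generator. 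Lifting $t^{a_1},\dots,t^{a_m}$ to minimal generators $x_1,\dots,x_r$ of $R$ gives $r=m$, and after reordering by decreasing degree, $\deg x_i=a_{r+1-i}$; in particular $\deg x_r=a_1$ and $\deg x_1=a_m$.

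Since $R$ is Cohen-Macaulay of dimension two and $(x_0,y)$ is a homogeneous regular sequence, standard formulas give
\[
e(R_+)\ =\ \dim_\FF R/(x_0,y)\qquad\text{and}\qquad a(R)\ =\ a\bigl(R/(x_0,y)\bigr)-\deg x_0-a_1.
\]
Now $R/(x_0,y)\cong\FF[H]/(t^{a_1})$ has $\FF$-basis the Ap\'ery set of $H$ with respect to $a_1$, of cardinality $a_1$ and top degree $f(H)+a_1$, where $f(H)$ denotes the Frobenius number. Hence $e(R_+)=a_1$ and $a(R)=f(H)-\deg x_0$. Condition (2) then translates cleanly: the equality $\deg x_r=r$ is $a_1=m$, i.e.\ $H$ has maximal embedding dimension, equivalently $\edim R=e(R_+)+1$ (minimal multiplicity); and under this MED condition one has $a_m=f(H)+a_1$, so $r+\deg x_0>\deg x_1$ reads $\deg x_0>f(H)$, i.e.\ $a(R)<0$. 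The strict chain $\deg x_1>\cdots>\deg x_r$ is automatic from the decreasing-degree ordering.

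The equivalence (1)$\Leftrightarrow$(2) thus reduces to the criterion that a two-dimensional normal Cohen-Macaulay graded $\FF$-algebra with $\FF=R_0$ algebraically closed has rational singularities if and only if it has minimal multiplicity and $a(R)<0$. The implication ``rational $\Rightarrow$ minimal multiplicity'' is Lipman's theorem for rational surface singularities, and ``rational $\Rightarrow a(R)<0$'' is a standard local-cohomology consequence of $p_g(R)=0$. The main obstacle I expect is the converse; I plan to address it via the Dolga\v cev-Pinkham-Demazure presentation $R=R(X,D)$, using the numerical conditions in (2) (as translated above) to verify the vanishings $H^1(X,\calO_X(\lf nD\rf))=0$ for all $n\ge 0$ --- equivalently $p_g(R)=0$ --- which yields rational singularities.
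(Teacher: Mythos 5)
Your overall route coincides with the paper's: reduce everything to the numerical semigroup $H$ of $R/x_0R$, show that condition (2) is equivalent to the conjunction of minimal multiplicity ($\edim R=e(R)+1$) and $a(R)<0$, quote the minimal-multiplicity property of two-dimensional rational singularities for one direction, and use the Flenner--Watanabe criterion (a normal graded ring of dimension two has rational singularities if and only if $a(R)<0$) for the rest. Your Ap\'ery-set computation giving $e(R_+)=a_1$ and $a(R)=f(H)-\deg x_0$ (with $f(H)$ the Frobenius number) is correct and is the same arithmetic the paper extracts from \cite[Corollary~3.2]{RG}. For the converse you only sketch a plan, but the vanishings $H^1(X,\calO_X(nD))=0$ for $n\ge0$ that you propose to verify are precisely the statement $a(R)<0$ via the identification $[H^2_{R_+}(R)]_n\cong H^1(X,\calO_X(nD))$, so your plan amounts to invoking the Flenner--Watanabe theorem that the paper cites; that is fine.

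The one genuine flaw is the claim that ``degree considerations on any potential relation $x_0=\sum r_is_i$ force $x_0\notin R_+^2$.'' This is false as stated: neither degrees nor primality prevents a homogeneous prime element from lying in $R_+^2$. Indeed, in Example~\ref{example:237} the element $y^3-\lambda x^2$ is a homogeneous prime element of degree $42$ that visibly lies in $R_+^2$. Since your identification $r=m$ (and hence the equivalence of $\deg x_r=r$ with minimal multiplicity) depends on $x_0$ being a minimal generator, this step must be justified in the direction (1)$\Rightarrow$(2), where it is not a hypothesis. The repair uses the data already available there: lifts of the $m$ minimal generators of $\FF[H]$ together with $x_0$ generate $R$, so $\edim R\le m+1$; on the other hand, rational singularities give minimal multiplicity, and $e(R)=e(\FF[H])=a_1\ge m$ yields $\edim R=e(R)+1\ge m+1$. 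Hence $\edim R=m+1>m=\edim(R/x_0R)$, which forces $x_0\notin R_+^2$. With that patch your argument is complete and is essentially the paper's proof.
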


\begin{proof}
Note that $R/x_0R$ is a numerical semigroup ring; let $H$ denote the semigroup.

(1) $\implies$ (2): The element $x_0$ extends to a minimal generating set $x_0,\dots,x_r$ for~$R$. Since~$R/x_0R=\FF[H]$ is a numerical semigroup ring, the degrees of $x_1,\dots,x_r$ are distinct; after reindexing, we may assume that
\[
\deg x_1 > \cdots > \deg x_r.
\]
Since $R$ is a $2$-dimensional ring with rational singularities, it has minimal multiplicity by~\cite[Theorem~3.1]{HW}, namely
\[
e(R)\ =\ \edim(R)-1.
\]
As $x_0$ is part of a minimal reduction of $R_+$, the ring $R/x_0R$ has minimal multiplicity as well, i.e., $e(R/x_0R)=r$. It follows that $\deg x_r=r$. By \cite[Corollary~3.2]{RG}, the Frobenius number of $H$ is $\deg x_1-\deg x_r = \deg x_1-r$, which is the $a$-invariant of $\FF[H]$. But then
\[
a(R)+\deg x_0\ =\ a(\FF[H])\ =\ \deg x_1-r.
\]
Since $R$ is a ring of positive dimension with rational singularities, $a(R)$ must be negative by \cite{Flenner, Watanabe:k:star}, implying that $r+\deg x_0 > \deg x_1$ as desired.

(2) $\implies$ (1): Since $R$ is normal by assumption, one has only to verify that $a(R)<0$ in view of the above references. This is immediate since the $a$-invariant of $\FF[H]$, equivalently the Frobenius number of $H$, is $\deg x_1-r$.
\end{proof}

\begin{example}
Consider the $\QQ$-divisor
\[
D\colonequals\frac{5}{7}(0)-\frac{4}{7}(\infty)
\]
on $\PP^1\colonequals\Proj\CC[u,v]$, with points parametrized by $u/v$. Then $R\colonequals R(\PP^1,D)$ has generators
\[
w\colonequals\frac{v^2}{u^2}T^3, \quad x\colonequals\frac{v^3}{u^3}T^5, \quad
y\colonequals\frac{v^4}{u^4}T^7, \quad z\colonequals\frac{v^5}{u^5}T^7.
\]
The relations are readily seen to be the size two minors of the matrix
\[
\begin{pmatrix}
w & x & z\\
x & y & w^3
\end{pmatrix}.
\]
Each point $P=[\lambda:1]$ with $\lambda\neq0$ gives a prime ideal
\[
\oplus_{n\ge 0} H^0(X,\calO_X(nD-P))T^n\ =\ (y-\lambda z)R,
\]
and these are precisely the homogeneous principal prime ideals of $R$.

For example,
\[
R/(y-z)R\ =\ \CC[t^3, t^5, t^7].
\]
Since $(y-z, w)R$ is a minimal reduction of $R_+$ and the grading on $R/(y-z)R$ is irredundant, Proposition~\ref{proposition:rat:sing} applies. The ring $R$ has rational singularities since $a(R)=-3$, and the inequalities~\eqref{equation:rat:sing} indeed hold since
\[
3+\deg (y-z) > \deg y > \deg x > \deg w = 3.
\]
\end{example}

\begin{example}
Take $R$ as in Example~\ref{example:236}, i.e., $R\colonequals\CC[x,y,z]/(x^2+y^3+z^6)$, with $\deg x=3$, $\deg y=2$, and $\deg z=1$. Then $z$ is a prime element such that the induced grading on $R/zR$ is irredundant; $z$ is also part of the minimal reduction $(z,y)R$ of $R_+$. Since $a(R)=0$, the ring~$R$ does not have rational singularities; likewise,~\eqref{equation:rat:sing} does not hold since
\[
2+\deg z \not> \deg x.
\]
\end{example}


\end{document}